\newcommand{\add}[1]{\textcolor{blue}{#1}}
\newcommand{\N}{\mathbb{N}}
\newcommand{\R}{\mathbb{R}}
\newcommand{\PP}{\mathsf{P}} 
\newcommand{\QQ}{\mathsf{Q}} 
\newcommand{\EE}{\mathsf{E}} 
\newcommand{\Var}{\mathsf{Var}} 
\newcommand{\bb}[1]{\boldsymbol{#1}}
\newcommand{\OO}{\mathcal{O}}
\newcommand{\rd}{{\rm d}}
\newcommand{\ind}{\mathds{1}}
\title{Asymptotic comparison of negative multinomial \\and multivariate normal experiments}
\author[1\authfn{1}]{Christian Genest}
\author[2\authfn{1}]{Fr\'ed\'eric Ouimet}
\affil[1]{Department of Mathematics and Statistics, McGill University, Montr\'eal (Qu\'ebec) Canada H3A 0B9}
\affil[2]{Centre de recherches math\'ematiques, Universit\'e de Montr\'eal, Montr\'eal (Qu\'ebec) Canada H3T 1J4}
\runningauthor{Genest \& Ouimet}
\begin{document}

\maketitle

\begin{abstract}
This note presents a refined local approximation for the logarithm of the ratio between the negative multinomial probability mass function and a multivariate normal density, both having the same mean-covariance structure. This approximation, which is derived using Stirling's formula and a meticulous treatment of Taylor expansions, yields an upper bound on the Hellinger distance between the jittered negative multinomial distribution and the corresponding multivariate normal distribution. Upper bounds on the Le\,Cam distance between negative multinomial and multivariate normal experiments ensue.

\keywords{Asymptotic theory, comparison of experiments, Le\,Cam distance, local limit theorem, multivariate normal distribution, negative multinomial distribution.}
\end{abstract}

\section{Introduction\label{sec:1}}

Given an integer $d \in \N = \{1, 2, \ldots \}$, consider an experiment that generates $d + 1 \ge 2$ possible (mutually disjoint) outcomes occurring with positive probabilities $p_0, \ldots, p_d \in (0, 1)$, respectively. If sampling proceeds randomly until the number of outcomes of type $0$ reaches the predetermined value $r \in \mathbb{N}$, the random (column) vector $\bb{K} = (K_1, \ldots, K_d)$ recording the number of items of types $1$ through $d$ is then said to have a negative multinomial distribution with parameters $r \in \mathbb{N}$ and $\bb{p} = (p_1, \ldots, p_d)$, denoted $\bb{K} \sim \mathcal{N\hspace{-0.8mm}M} (r, \bb{p})$, where $\|\bb{p}\|_1 = p_1 + \cdots + p_d = 1 - p_0 < 1$.

Introduced independently by \citet{MR33996}, \citet{MR50836}, and \citet{MR53459}, this flexible urn model extends the classical multinomial distribution by allowing for overdispersion and zero inflation. It has found applications in ecology and genetics, but also in many other contexts involving categorical data. The case $d = 1$ corresponds to the classical negative binomial distribution.

An extension of the $\mathcal{N\hspace{-0.8mm}M} (r,\bb{p})$ distribution can be obtained, for any real-valued parameter $r \in (0, \infty)$, by defining its probability mass function, for any vector $\bb{k} = (k_1, \ldots, k_d) \in \N_0^d = \{0, 1, \ldots \}^d$, by
\begin{equation}
\label{eq:1}
P_{r, \bb{p}}(\bb{k}) = \frac{\Gamma(r + \|\bb{k}\|_1)}{\Gamma(r) \prod_{i=1}^d k_i!} \, (1 - \|\bb{p}\|_1)^r \prod_{i=1}^d p_i^{k_i} = \frac{\Gamma(r + \|\bb{k}\|_1)}{\Gamma(r) \prod_{i=1}^d k_i!} \, p_0^{r + \|\bb{k}\|_1} \prod_{i=1}^d \varrho_i^{k_i},
\end{equation}
where $\Gamma$ denotes Euler's gamma function and $\varrho_1 = p_1 / p_0, \ldots, \varrho_d = p_d / p_0$. The main characteristics of this model are summarized, e.g., in \cite{MR171341} and \cite[Chapter 36]{MR1429617}. In particular, the mean \add{vector} and covariance matrix of this distribution are respectively given by $\bb{\mu} = r \bb{\varrho}$ and $r \Sigma$, where $\bb{\varrho} = (\rho_1, \ldots, \rho_d)$ and $\Sigma = \text{diag} (\bb{\varrho}) + \bb{\varrho} \bb{\varrho}^{\top}$.

It has long been known, see, e.g., \cite{MR0251828, MR1219279}, that for sufficiently large values of $r$, the probability mass function of the negative multinomial distribution given in~\eqref{eq:1} can be approximated by a multivariate normal density with the same mean-covariance structure, denoted $\mathcal{N}_d(r \bb{\varrho}, r \Sigma)$.

The aim of this note is to quantify more precisely the distance between the two distributions as $r \to \infty$. General results on local asymptotic expansions of probabilities associated with sums of lattice random vectors already exist; see, e.g., \citep{MR0436272, doi:10.1137/1114060, doi:10.1007/BF00967926}. However, the error terms in these expansions are not explicit, which hampers their practicality. To address this limitation and enhance applicability, the specific form of the negative multinomial distribution will be leveraged. This will lead to explicit error terms and rates of convergence which refine existing results. The machinery involved is also much simpler and, therefore, should be transparent to users. Specifically, this paper's contribution is threefold.

In Proposition~\ref{prop:1}, a refined local approximation for the log-ratio of the negative multinomial probability mass function over the corresponding multivariate normal density function is proved, based on Stirling's formula and a careful handling of Taylor expansions. In Proposition~\ref{prop:2}, the new refined local approximation is combined with moment estimates to find an upper bound on the Hellinger distance between the jittered negative multinomial distribution and the corresponding multivariate normal distribution. In Proposition~\ref{prop:3}~and~Corollary~\ref{cor:1}, upper bounds on the Le\,Cam distance between negative multinomial and multivariate normal experiments are derived.

These results are stated in Section~\ref{sec:2} and comments regarding their significance are given in Section~\ref{sec:3}. All proofs are in Appendix~\ref{App:1} and moment estimates needed for the proof of Proposition~\ref{prop:2} are relegated to Appendix~\ref{App:2}. In what follows, the notation $u_r = \OO(v_r)$ means that there exists a universal constant $C\in (0,\infty)$ such that $\limsup_{r\to \infty} |u_r / v_r | \le C$. Any dependence of $C$ on a variable is added as a subscript to the $\OO$ notation. Moreover, $\phi_{\Sigma}$ denotes the centered $d$-variate normal density with covariance matrix $\Sigma$, defined, for all vectors $\bb{x} = (x_1, \ldots, x_d) \in \R^d$, by
\[
\phi_{\Sigma }(\bb{x}) = (2 \pi)^{-d/2} |\Sigma|^{-1/2} \exp ( - \bb{x}^{\top} \Sigma^{-1} \, \bb{x} /2),
\]
where $|\Sigma| = \varrho_1 \times \cdots \times \varrho_d / p_0$ is the determinant of $\Sigma$ and, for all integers $i, j \in \{1, \ldots, d \}$, $(\Sigma^{-1})_{ij} = \varrho_i^{-1} \ind_{\{i = j\}} - p_0$. Finally, for arbitrary $\bb{k} = (k_1, \ldots, k_d) \in \N_0^d$ and all $i \in \{1, \ldots, d \}$, introduce the notation
\[
\delta_{i, k_i} = (k_i - r \varrho_i) / \sqrt{r},
\]
and define the vector $\bb{\delta}_{\bb{k}} = (\delta_{1, k_1},\ldots,\delta_{d, k_d})$, whose $L_1$-norm is $\|\bb{\delta}_{\bb{k}}\|_1 = |\delta_{1, k_1}| + \cdots + |\delta_{d, k_d}|$.

\section{Main results\label{sec:2}}

Proposition~\ref{prop:1} provides an asymptotic expansion for the log-ratio of the negative multinomial probability mass function over the multivariate normal density function with the same mean-covariance structure, up to an error of order $r^{-3/2}$. The case $d = 1$ was handled recently in \cite[Lemma~1]{doi:10.1007/s00184-023-00897-2} in studying the asymptotics of the median of the negative binomial distribution to solve an open problem from \cite{MR4135709}. The result below refines substantially the pointwise convergence result due to \cite{MR0251828, MR1219279}, which did not include explicit error terms of any order.

\begin{proposition}
\label{prop:1}
For any real number $\gamma \in (0, \infty)$, let
\begin{equation}
\label{eq:2}
B_{r,\bb{p}}(\gamma) = \left\{\bb{k} \in \N_0^d : \max_{i\in \{1,\ldots,d\}} \bigg|\frac{\delta_{i,k_i}}{\sqrt{r} \varrho_i}\bigg| \le \gamma r^{-1/3} ~~\text{and}~~ \bigg|\sum_{i=1}^d \frac{\delta_{i,k_i}}{\sqrt{r} / p_0}\bigg| \le \gamma r^{-1/3}\right\}
\end{equation}
denote the bulk of the support of the negative multinomial distribution. Then, for all $\bb{k} \in B_{r,\bb{p}}(\gamma)$, one has, as $r \to \infty$,
\[
\ln \left \{ \frac{P_{r,\bb{p}}(\bb{k})}  {r^{-d/2} \phi_{\Sigma}(\bb{\delta}_{\bb{k}})} \right \} = r^{-1/2} F_{\bb{\varrho}, p_0, \bb{\delta}_{\bb{k}}} + r^{-1} S_{\bb{\varrho}, p_0, \bb{\delta}_{\bb{k}}} + \OO_{d,\bb{p},\gamma} \big \{ r^{-3/2} (1 + \|\bb{\delta}_{\bb{k}}\|_1^5) \big\},
\]
where
\[
F_{\bb{\varrho}, p_0, \bb{\delta}_{\bb{k}}} = - \frac{1}{2} \sum_{i=1}^d \delta_{i, k_i}  (\varrho_i^{-1} + p_0) + \frac{1}{6} \sum_{i, j, \ell = 1}^d \delta_{i,k_i} \delta_{j, k_j} \delta_{\ell, k_\ell}  (\varrho_i^{-2} \bb{1}_{\{i = j = \ell\}} - p_0^2)
\]
and
\begin{multline*}
S_{\bb{\varrho}, p_0, \bb{\delta}_{\bb{k}}} = \frac{1}{12} \left( p_0 - 1 - \sum_{i=1}^d \varrho_i^{-1}\right) + \frac{1}{4} \sum_{i, j=1}^d \delta_{i, k_i} \delta_{j, k_j}  (\varrho_i^{-2} \bb{1}_{\{i = j\}} + p_0^2)
\\ -  \frac{1}{12} \sum_{i, j, \ell, m=1}^d  \delta_{i,k_i} \delta_{j,k_j} \delta_{\ell,k_\ell} \delta_{m,k_m} ( \varrho_i^{-3} \bb{1}_{\{i = j = \ell = m\}} - p_0^3 ).
\end{multline*}
\end{proposition}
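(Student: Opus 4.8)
The plan is to work directly with the explicit form of $P_{r,\bb{p}}(\bb{k})$ in~\eqref{eq:1}, take logarithms, and apply Stirling's formula in its factorial form to each of the three gamma factors. Writing $A = r + \|\bb{k}\|_1$, the identity $\|\bb{p}\|_1 = 1 - p_0$ together with $\sum_{i=1}^d \varrho_i = (1-p_0)/p_0$ gives the two exact substitutions $A = r/p_0 + \sqrt{r}\,\sigma$ and $k_i = r\varrho_i + \sqrt{r}\,\delta_{i,k_i}$, where $\sigma = \sum_{i=1}^d \delta_{i,k_i}$. First I would record
\[
\ln(k_i!) = \bigl(k_i + \tfrac12\bigr)\ln k_i - k_i + \tfrac12 \ln(2\pi) + \frac{1}{12 k_i} + \OO(k_i^{-3}),
\]
and the analogues for $\ln\Gamma(A)$ and $\ln\Gamma(r)$, noting that on $B_{r,\bb{p}}(\gamma)$ one has $k_i \ge \tfrac12 r \varrho_i$ and $A \ge \tfrac12 r/p_0$ for $r$ large, so every Stirling remainder is genuinely $\OO_{d,\bb{p},\gamma}(r^{-3})$. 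Alongside this I would write out $\ln\{r^{-d/2}\phi_{\Sigma}(\bb{\delta}_{\bb{k}})\}$ using $\ln|\Sigma| = \sum_{i=1}^d \ln \varrho_i - \ln p_0$ and $\bb{\delta}_{\bb{k}}^{\top}\Sigma^{-1}\bb{\delta}_{\bb{k}} = \sum_{i,j=1}^d \delta_{i,k_i}\delta_{j,k_j}(\varrho_i^{-1}\ind_{\{i=j\}} - p_0)$.

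The next step is to exploit the bookkeeping that makes the statement tractable: after substituting $\ln A = \ln(r/p_0) + \ln(1+y)$ with $y = p_0\sigma/\sqrt{r}$, and $\ln k_i = \ln(r\varrho_i) + \ln(1+x_i)$ with $x_i = \delta_{i,k_i}/(\sqrt{r}\,\varrho_i)$, the power terms $A\ln p_0 + \sum_{i=1}^d k_i \ln \varrho_i$ combine with the leading logarithmic pieces so that the coefficient of $\ln r$ collapses to $A - r - \|\bb{k}\|_1 - d/2 = -d/2$, which cancels the $+\tfrac{d}{2}\ln r$ coming from $r^{-d/2}$; the residual constant $\tfrac12\ln p_0 - \tfrac12\sum_{i=1}^d \ln\varrho_i$ cancels $+\tfrac12\ln|\Sigma|$; and all the $\tfrac12\ln(2\pi)$ terms cancel. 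This reduces the problem to establishing, as an exact identity up to $\OO_{d,\bb{p},\gamma}(r^{-3})$,
\[
\ln\!\left\{\frac{P_{r,\bb{p}}(\bb{k})}{r^{-d/2}\phi_{\Sigma}(\bb{\delta}_{\bb{k}})}\right\} = \bigl(A - \tfrac12\bigr)\ln(1+y) - \sum_{i=1}^d \bigl(k_i + \tfrac12\bigr)\ln(1+x_i) + \tfrac12\,\bb{\delta}_{\bb{k}}^{\top}\Sigma^{-1}\bb{\delta}_{\bb{k}} + \frac{1}{12 A} - \frac{1}{12 r} - \sum_{i=1}^d \frac{1}{12 k_i}.
\]

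I would then Taylor-expand $\ln(1+y)$ and each $\ln(1+x_i)$ to fourth order and collect by powers of $r^{-1/2}$. Since $\bigl(r/p_0\bigr)y^m \propto r^{1-m/2}\sigma^m$, $\sqrt{r}\,\sigma\,y^m \propto r^{(1-m)/2}\sigma^{m+1}$, and the factor $\tfrac12\,\bb{\delta}_{\bb{k}}^{\top}\Sigma^{-1}\bb{\delta}_{\bb{k}}$ supplies the matching quadratic piece, the $\OO(r^{1/2})$ contributions ($\sqrt{r}\,\sigma$ against $-\sqrt{r}\,\sigma$) and the $\OO(1)$ contributions cancel identically, confirming that the log-ratio begins at order $r^{-1/2}$. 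The surviving $r^{-1/2}$ coefficient assembles into $F_{\bb{\varrho},p_0,\bb{\delta}_{\bb{k}}}$ and the $r^{-1}$ coefficient, together with the leading parts of $\tfrac{1}{12A} - \tfrac{1}{12r} - \sum_i \tfrac{1}{12k_i}$ (which yield the constant $\tfrac1{12}(p_0 - 1 - \sum_{i=1}^d \varrho_i^{-1})$), assembles into $S_{\bb{\varrho},p_0,\bb{\delta}_{\bb{k}}}$. The only nonroutine algebra here is the re-indexing of single sums such as $\sum_{i=1}^d \delta_{i,k_i}^3\varrho_i^{-2}$ and powers $\sigma^2,\sigma^3,\sigma^4$ into the symmetric tensor contractions with indicator functions that appear in the statement.

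The main obstacle will be the remainder estimate rather than the identities. The bulk $B_{r,\bb{p}}(\gamma)$ is engineered precisely so that $|x_i| \le \gamma r^{-1/3}$ and $|y| \le \gamma r^{-1/3}$, hence both are at most $\tfrac12$ for $r$ large; this legitimizes the fourth-order expansion and, via the Lagrange form, bounds each logarithmic tail by a constant multiple of $|x_i|^5$ or $|y|^5$. The dominant discarded contributions are of the form $(r/p_0)\,\OO(y^5) = \OO(r^{-3/2}\sigma^5)$, $\sqrt{r}\,\sigma\,\OO(y^4) = \OO(r^{-3/2}\sigma^5)$, and $-\tfrac12\,\OO(y^3) = \OO(r^{-3/2}\sigma^3)$, with their $x_i$-counterparts summed over $i$. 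Using $|\sigma| \le \|\bb{\delta}_{\bb{k}}\|_1$ and $|\delta_{i,k_i}| \le \|\bb{\delta}_{\bb{k}}\|_1$, and absorbing the $\OO(r^{-3})$ Stirling tails and the post-$r^{-1}$ remainders of $\tfrac{1}{12A}-\sum_i\tfrac{1}{12k_i}$ into a constant, every leftover term is bounded by $\OO_{d,\bb{p},\gamma}\{r^{-3/2}(1+\|\bb{\delta}_{\bb{k}}\|_1^5)\}$, which is the claimed error. The delicate points are to verify that no kept term is inadvertently dropped into the remainder, that the constant in $\OO_{d,\bb{p},\gamma}$ depends only on $d$, $\bb{p}$, and $\gamma$ (uniformly over the bulk), and that the lower bound $k_i \ge \tfrac12 r\varrho_i$ is used to control both $1/k_i$ and the Stirling tails.
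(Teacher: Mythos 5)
Your proposal follows essentially the same route as the paper's proof: take logarithms of \eqref{eq:1}, apply Stirling's formula to $\Gamma(r+\|\bb{k}\|_1)$, $\Gamma(r)$ and the $k_i!$, substitute $k_i = r\varrho_i(1 + \delta_{i,k_i}/(\sqrt{r}\varrho_i))$ and $r+\|\bb{k}\|_1 = (r/p_0)(1+p_0\sigma/\sqrt{r})$, Taylor-expand $(1+x)\ln(1+x)$ to fourth order and $\ln(1+x)$ to second order, and use the bulk conditions to keep the arguments bounded away from $-1$ and control the remainders by $r^{-3/2}(1+\|\bb{\delta}_{\bb{k}}\|_1^5)$. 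Your intermediate identity $(A-\tfrac12)\ln(1+y) - \sum_i (k_i+\tfrac12)\ln(1+x_i) + \tfrac12\bb{\delta}_{\bb{k}}^{\top}\Sigma^{-1}\bb{\delta}_{\bb{k}} + \tfrac{1}{12A} - \tfrac{1}{12r} - \sum_i \tfrac{1}{12k_i}$ is exactly the paper's $A_r - B_r - C_r - D_r$ decomposition in slightly different packaging, and the cancellation checks and error bookkeeping are the same.
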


The refined local approximation of Proposition~\ref{prop:1} can be used to compute an upper bound on the Hellinger distance between the probability measures induced by the negative multinomial distribution, jittered by a $d$-dimensional uniform random vector on $(-1/2,1/2)^d$, and the corresponding centered multivariate normal distribution.
\begin{proposition}
\label{prop:2}
Let $\bb{K}\sim \mathcal{N\hspace{-0.8mm}M} (r,\bb{p})$ and $\bb{U}\sim \mathcal{U}\hspace{0.2mm}(-1/2,1/2)^d$ be independent random vectors. Define $\bb{X} = \bb{K} + \bb{U}$ and let $\widetilde{\PP}_{r,\bb{p}}$ be the law of $\bb{X}$. In particular, if $\PP_{r,\bb{p}}$ denotes the law of $\bb{K}$, then for any Borel set $B\in \mathscr{B}(\R^d)$, one has
\[
\widetilde{\PP}_{r,\bb{p}}(B) = \int_{\N_0^d} \int_{(-{1}/{2}, {1}/{2})^d} \ind_B(\bb{k} + \bb{u}) \rd \bb{u} \, \PP_{r,\bb{p}}(\rd \bb{k}).
\]
Moreover if $\QQ_{r,\bb{p}}$ denotes the measure on $\R^d$ induced by the multivariate normal law $\mathcal{N}_d(r \bb{\varrho}, r \Sigma)$, then for any vector $\bb{p}\in (0,1)^d$ that satisfies $\|\bb{p}\|_1 < 1$, there exists a universal positive constant $C\in (0,\infty)$ such that
\[
\mathcal{H}(\widetilde{\PP}_{r,\bb{p}}, \QQ_{r,\bb{p}}) \le \frac{C}{\sqrt{r p_0}} \sqrt{d^{\hspace{0.2mm}2} + \sum_{i=1}^d \frac{1}{p_i}} \le \frac{C \, d}{\sqrt{r}} \times \frac{2}{\min(p_0,\ldots, p_d)},
\]
where $\mathcal{H}$ denotes the Hellinger distance.
\end{proposition}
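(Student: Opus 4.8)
The plan is to work throughout with Lebesgue densities on $\R^d$. The jittered law $\widetilde{\PP}_{r,\bb{p}}$ has the piecewise-constant density $\tilde p(\bb{x}) = P_{r,\bb{p}}(\bb{k})$ on each unit cube $\bb{k} + (-1/2,1/2)^d$ with $\bb{k}\in\N_0^d$, while $\QQ_{r,\bb{p}}$ has density $q(\bb{x}) = r^{-d/2}\phi_{\Sigma}(\bb{\delta}_{\bb{x}})$ with $\bb{\delta}_{\bb{x}} = (\bb{x} - r\bb{\varrho})/\sqrt{r}$; this rewriting follows from $|r\Sigma|^{-1/2} = r^{-d/2}|\Sigma|^{-1/2}$ and $(\bb{x}-r\bb{\varrho})^{\top}(r\Sigma)^{-1}(\bb{x}-r\bb{\varrho}) = \bb{\delta}_{\bb{x}}^{\top}\Sigma^{-1}\bb{\delta}_{\bb{x}}$. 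Using $2\mathcal{H}^2 = \int_{\R^d}(\sqrt{\tilde p} - \sqrt{q})^2\,\rd\bb{x}$, I would split the integral into the bulk $\bigcup_{\bb{k}\in B_{r,\bb{p}}(\gamma)}(\bb{k}+(-1/2,1/2)^d)$, for a cutoff $\gamma$ to be fixed, and its complement.

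On the bulk I would write $(\sqrt{\tilde p} - \sqrt{q})^2 = q\,(e^{\Lambda/2} - 1)^2$ with $\Lambda(\bb{x}) = \ln\{\tilde p(\bb{x})/q(\bb{x})\}$, and decompose $\Lambda(\bb{x}) = \ln\{P_{r,\bb{p}}(\bb{k})/(r^{-d/2}\phi_{\Sigma}(\bb{\delta}_{\bb{k}}))\} + \ln\{\phi_{\Sigma}(\bb{\delta}_{\bb{k}})/\phi_{\Sigma}(\bb{\delta}_{\bb{x}})\}$. The first piece is controlled by Proposition~\ref{prop:1}, namely $r^{-1/2}F_{\bb{\varrho},p_0,\bb{\delta}_{\bb{k}}} + \OO(r^{-1}(\cdots))$, and the second is the explicit jitter correction $r^{-1/2}\bb{\delta}_{\bb{k}}^{\top}\Sigma^{-1}\bb{u} + (2r)^{-1}\bb{u}^{\top}\Sigma^{-1}\bb{u}$ with $\bb{u} = \bb{x}-\bb{k}$. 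Rather than the lossy bound $(e^{\Lambda/2}-1)^2 \le C\Lambda^2$, I would use the Taylor estimate $(e^{y/2}-1)^2 \le \tfrac12 y^2 + \tfrac1{32}y^4 e^{|y|}$, so that the leading term $\tfrac12\Lambda^2$ carries no exponential factor. Summing over $\bb{k}$ and integrating out $\bb{u}$, the quantity $\sum_{\bb{k}} r^{-d/2}\phi_{\Sigma}(\bb{\delta}_{\bb{k}})(\cdots)$ is a Riemann sum (the $\bb{\delta}_{\bb{k}}$ sit on a grid of spacing $r^{-1/2}$) converging to a Gaussian expectation, so $\int_{\text{bulk}} q\,\Lambda^2$ collapses to moments $\EE[\|\bb{Z}\|^m]$ with $\bb{Z}\sim\mathcal{N}_d(\bb{0},\Sigma)$ together with uniform moments of $\bb{u}$. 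The dominant contribution, e.g. $\tfrac14\EE[(\sum_i Z_i\varrho_i^{-1})^2] = \tfrac14\sum_{i,j}\varrho_i^{-1}\varrho_j^{-1}\Sigma_{ij} = \tfrac14(d^2 + p_0\sum_i p_i^{-1})$ after using $\Sigma_{ij} = \varrho_i\ind_{\{i=j\}} + \varrho_i\varrho_j$ and $\varrho_i^{-1} = p_0/p_i$, together with elementary bounds such as $\sum_i p_i^2 \le \sum_i p_i < 1 \le d^2 + \sum_i p_i^{-1}$ for the cubic and quartic pieces, yields $\int_{\text{bulk}}(\sqrt{\tilde p}-\sqrt q)^2 \le \frac{C}{r p_0}(d^2 + \sum_i p_i^{-1})$ with a universal $C$.

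For the complement I would use $(\sqrt a - \sqrt b)^2 \le a + b$ to bound that contribution by $\PP_{r,\bb{p}}(B_{r,\bb{p}}(\gamma)^{c}) + \QQ_{r,\bb{p}}(B_{r,\bb{p}}(\gamma)^{c})$, the masses the two models place outside the bulk. Leaving the bulk forces some coordinate to deviate from its mean by order $r^{1/6}$ standard deviations, so the moment estimates of Appendix~\ref{App:2} make both tail probabilities decay faster than any power of $r$, hence negligible against the $r^{-1}$ bulk term. The quartic remainder $\tfrac1{32}\int_{\text{bulk}} q\,\Lambda^4 e^{|\Lambda|}$ is handled by noting that on the bulk $r^{-1/2}\|\bb{\delta}_{\bb{k}}\|_1^3 \lesssim r^{-1/3}\|\bb{\delta}_{\bb{k}}\|_1^2$, so $q\,e^{|\Lambda|}$ retains Gaussian decay and the integral is $\OO(r^{-2})$, again subdominant.

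The main obstacle is keeping $C$ genuinely universal in $\bb{p}$ and $d$. Two points require care: first, on the bulk $\Lambda$ is only $\OO_{d,\bb{p},\gamma}(1)$ (its cubic part is of size $\gamma^3/p_0$ near the boundary), so the exponential factor must be absorbed by the Gaussian decay of $q$ and never pulled out as a constant — this is precisely why the non-exponential splitting $(e^{y/2}-1)^2 \le \tfrac12 y^2 + \tfrac1{32}y^4 e^{|y|}$ is used on the leading term; second, every moment in the bookkeeping must be made to collapse onto the single scale-free combination $d^2 + \sum_i p_i^{-1}$, which rests on inequalities like $\sum_i p_i^2 < 1$ and $\sum_i p_i^{-1} \ge d^2/(1-p_0) \ge d^2$. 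Once $\mathcal{H}^2 \le \frac{C}{r p_0}(d^2 + \sum_i p_i^{-1})$ is secured, the first displayed bound follows by taking square roots, and the second follows from the elementary estimate $p_0^{-1}(d^2 + \sum_i p_i^{-1}) \le 2 d^2/\min(p_0,\ldots,p_d)^2$, valid because $\min(p_0,\ldots,p_d) \le 1$ and $d \ge 1$.
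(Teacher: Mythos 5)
Your route is genuinely different from the paper's. The paper does not integrate $(\sqrt{\tilde p}-\sqrt q)^2$ directly: it invokes Carter's inequality (their Eq.~(7)), $\mathcal{H}^2(\widetilde{\PP}_{r,\bb{p}},\QQ_{r,\bb{p}}) \le 2\,\PP\{\bb{X}\in\widetilde B^c\} + \EE[\ln\{\rd\widetilde\PP_{r,\bb{p}}/\rd\QQ_{r,\bb{p}}(\bb{X})\}\ind_{\widetilde B}(\bb{X})]$, so that only the \emph{first} moment of the log-ratio is needed. That is a decisive simplification: since $\EE(\delta_{i,K_i})=0$ and $\EE(\delta_{i,K_i}\delta_{j,K_j}\delta_{\ell,K_\ell})=\OO(r^{-1/2})$ (Lemmas~A and~B), the $r^{-1/2}F$ term of Proposition~1 contributes only at order $r^{-1}$, and the whole bound reduces to second and fourth moments under the negative multinomial law itself -- no Riemann-sum passage to Gaussian moments is required. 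Your approach needs $\EE[\Lambda^2]$, whose leading piece $r^{-1}\EE[F^2]$ contains Gaussian moments up to order six. This can be made to work, but your bookkeeping for it is too optimistic as written: a naive bound such as $(a-b)^2\le 2a^2+2b^2$ applied to the cubic part $\frac16\{\sum_i\varrho_i^{-2}Z_i^3-p_0^2(\sum_iZ_i)^3\}$ produces terms of size $p_0^{-2}$ (e.g.\ $p_0^4\,\EE[(\sum_iZ_i)^6]=15(1-p_0)^3p_0^{-2}$), which is \emph{not} dominated by $p_0^{-1}(d^2+\sum_ip_i^{-1})$ when $p_0\to0$ with $d$ fixed. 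One must exploit the near-cancellation between $\varrho_i^{-2}\ind_{\{i=j=\ell\}}$ and $p_0^2$ (visible already for $d=1$, where the coefficient is $\varrho_1^{-2}-p_0^2=p_0^3(1+p_1)/p_1^2$). The inequalities you cite ($\sum_ip_i^2<1$, etc.) do not capture this; the step needs to be carried out in full before the claimed envelope $Cp_0^{-1}(d^2+\sum_ip_i^{-1})$ is secured.

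The one outright gap is the off-bulk contribution. You bound it by $\PP_{r,\bb{p}}\{B_{r,\bb{p}}(\gamma)^c\}+\QQ_{r,\bb{p}}\{B_{r,\bb{p}}(\gamma)^c\}$ and assert that "the moment estimates of Appendix~B" make both tails decay faster than any power of $r$. Appendix~B only supplies moments up to order four, and Chebyshev with a fourth moment at the threshold $|\delta_{i,k_i}|\gtrsim \varrho_i r^{1/6}$ gives only $\OO(r^{-2/3})$ -- which is \emph{larger} than the $\OO(r^{-1})$ bulk term and would destroy the rate. What is actually needed (and what the paper uses in its Eq.~(8)) is a sub-exponential concentration inequality for the negative binomial marginals, giving $\exp(-c\,r^{1/3})$. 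This is available, but it is not a consequence of the quoted moment lemmas, so you must supply it separately. With that repair, and with the sixth-moment cancellation worked out, your argument goes through and delivers the same bound; it is simply considerably heavier than the paper's first-moment route through Carter's inequality.
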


\begin{remark}
Given the relation between the Hellinger distance and other probability metrics, the bound given in Proposition~\ref{prop:2} remains valid for any and all of the discrepancy, Kolmogorov, L\'evy, Prokhorov, and total variation metrics.
\end{remark}

Next, for any scalar $b \in (0, \infty)$, consider the set $\Theta_b =  \{\bb{p}\in (0,1)^d: \|\bb{p}\|_1 < 1 ~~ \text{and} ~~ \min(p_0, \ldots, p_d) \geq b \}$, and define the statistical experiments $\mathscr{P} = \{\PP_{r,\bb{p}} : {\bb{p}\in \Theta_b}\}$ and $\mathscr{Q} = \{\QQ_{r,\bb{p}} : \bb{p}\in \Theta_b \}$, where $\PP_{r,\bb{p}}$ denotes the measure on $\N_0^d$ induced by the $\mathcal{N\hspace{-0.8mm}M} (r,\bb{p})$ distribution, and $\QQ_{r,\bb{p}}$ denotes the measure on $\R^d$ induced by the $\mathcal{N}_d(r \bb{\varrho}, r \Sigma)$ distribution.

The Le\,Cam distance between these two experiments, called $\Delta$-distance in \cite{MR1784901}, is then given by
\[
\Delta(\mathscr{P},\mathscr{Q}) = \max \{ \delta(\mathscr{P},\mathscr{Q}),\delta(\mathscr{Q},\mathscr{P}) \},
\]
where
\begin{equation}
\label{eq:3}
\begin{aligned}
\delta(\mathscr{P},\mathscr{Q})
= \inf_{T_1} \sup_{\bb{p}\in \Theta_b} \bigg\|\int_{\N_0^d} T_1(\bb{k}, \cdot \, ) \, \PP_{r,\bb{p}}(\rd \bb{k}) - \QQ_{r,\bb{p}}\bigg\|, \quad \delta(\mathscr{Q},\mathscr{P})
= \inf_{T_2} \sup_{\bb{p}\in \Theta_b} \bigg\|\PP_{r,\bb{p}} - \int_{\R^d} T_2(\bb{x}, \cdot \, ) \, \QQ_{r,\bb{p}}(\rd \bb{x})\bigg\|.
\end{aligned}
\end{equation}
Here, $\|\cdot\|$ denotes the total variation norm, and the infima are taken, respectively, over all Markov kernels $T_1 : \N_0^d \times \mathscr{B}(\R^d) \to [0, 1]$ and $T_2 : \R^d \times \mathscr{B}(\N_0^d) \to [0, 1]$. See Section~\ref{sec:3} for a brief motivation of the Le\,Cam distance, and refer to \cite{MR3850766} for a review of Le\,Cam's theory for the comparison of statistical models.

The following result provides an upper bound on the Le\,Cam distance between negative multinomial and multivariate normal experiments.
\begin{proposition}
\label{prop:3}
There exists a positive constant $C_b\in (0,\infty)$ that depends only on $b \in (0, \infty)$ such that $\Delta (\mathscr{P},\mathscr{Q})  \le C_b \, d / \sqrt{r}$.
\end{proposition}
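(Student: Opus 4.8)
The plan is to reduce the claim to the Hellinger estimate of Proposition~\ref{prop:2}, using two standard facts from Le\,Cam theory: that adding the uniform jitter turns the discrete experiment $\mathscr{P}$ into an \emph{equivalent} experiment on $\R^d$, and that the Le\,Cam distance between two experiments carried by the same sample space and indexed by the same parameter set is controlled by the supremum over $\Theta_b$ of the total variation norm, which is itself dominated by a universal multiple of the Hellinger distance. Throughout, let $\widetilde{\mathscr{P}} = \{\widetilde{\PP}_{r,\bb{p}} : \bb{p} \in \Theta_b\}$ denote the jittered experiment built from the laws of Proposition~\ref{prop:2}.

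First I would show that $\Delta(\mathscr{P}, \widetilde{\mathscr{P}}) = 0$. In one direction, the map $\bb{k} \mapsto \bb{k} + \bb{U}$ with $\bb{U} \sim \mathcal{U}(-1/2,1/2)^d$ defines a Markov kernel from $\N_0^d$ to $\R^d$ whose application to $\PP_{r,\bb{p}}$ returns exactly $\widetilde{\PP}_{r,\bb{p}}$ for every $\bb{p}$, so $\delta(\mathscr{P}, \widetilde{\mathscr{P}}) = 0$. In the reverse direction, since the jitter takes values in $(-1/2,1/2)^d$, the deterministic componentwise nearest-integer map $R : \R^d \to \N_0^d$ satisfies $R(\bb{k} + \bb{u}) = \bb{k}$ for all $\bb{u}$ outside the $\widetilde{\PP}_{r,\bb{p}}$-null boundary set $\{\exists\, i : |u_i| = 1/2\}$; viewed as a degenerate Markov kernel, $R$ pushes $\widetilde{\PP}_{r,\bb{p}}$ back to $\PP_{r,\bb{p}}$, whence $\delta(\widetilde{\mathscr{P}}, \mathscr{P}) = 0$. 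Because the Le\,Cam distance satisfies the triangle inequality, it then suffices to bound $\Delta(\widetilde{\mathscr{P}}, \mathscr{Q})$, since
\[
\Delta(\mathscr{P}, \mathscr{Q}) \le \Delta(\mathscr{P}, \widetilde{\mathscr{P}}) + \Delta(\widetilde{\mathscr{P}}, \mathscr{Q}) = \Delta(\widetilde{\mathscr{P}}, \mathscr{Q}).
\]

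Next, both $\widetilde{\mathscr{P}}$ and $\mathscr{Q}$ are experiments on $\R^d$ indexed by the same $\bb{p} \in \Theta_b$, so choosing the identity kernel in each of the infima in the analogue of~\eqref{eq:3} gives
\[
\Delta(\widetilde{\mathscr{P}}, \mathscr{Q}) \le \sup_{\bb{p} \in \Theta_b} \big\| \widetilde{\PP}_{r,\bb{p}} - \QQ_{r,\bb{p}} \big\|.
\]
Using the comparison $\| \widetilde{\PP}_{r,\bb{p}} - \QQ_{r,\bb{p}} \| \le c\,\mathcal{H}(\widetilde{\PP}_{r,\bb{p}}, \QQ_{r,\bb{p}})$ for a universal constant $c \in (0,\infty)$, followed by the bound of Proposition~\ref{prop:2} and the fact that $\min(p_0, \ldots, p_d) \ge b$ for every $\bb{p} \in \Theta_b$, I would conclude
\[
\Delta(\mathscr{P}, \mathscr{Q}) \le \sup_{\bb{p} \in \Theta_b} \frac{2 c\, C\, d}{\sqrt{r}\,\min(p_0, \ldots, p_d)} \le \frac{2 c\, C}{b} \cdot \frac{d}{\sqrt{r}},
\]
which is the asserted inequality with $C_b = 2 c\, C / b$.

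The last display is routine once the ingredients are assembled; the part demanding genuine care, and the main obstacle, is the equivalence $\Delta(\mathscr{P}, \widetilde{\mathscr{P}}) = 0$. There one must verify that the rounding map $R$ is Borel measurable, that it inverts the jittering off a $\widetilde{\PP}_{r,\bb{p}}$-null set (so that the boundary hyperplanes $\{|u_i| = 1/2\}$ may be discarded), and that both the jitter kernel and $R$ are admissible Markov kernels in the sense required by the deficiencies in~\eqref{eq:3}. Once this equivalence and the triangle inequality are in place, the reduction to a supremum of total variation norms and thence to the already-established Hellinger bound is immediate.
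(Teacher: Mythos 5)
Your proof is correct and follows essentially the same route as the paper: jitter $\bb{K}$ with a uniform kernel to bound $\delta(\mathscr{P},\mathscr{Q})$, round to the nearest integer to bound $\delta(\mathscr{Q},\mathscr{P})$, and reduce both to the total-variation/Hellinger bound of Proposition~\ref{prop:2} with $\min(p_0,\ldots,p_d)\ge b$. The only cosmetic difference is that you package the jitter--round pair as the exact equivalence $\Delta(\mathscr{P},\widetilde{\mathscr{P}})=0$ plus the triangle inequality, whereas the paper composes the two kernels directly and invokes the contraction of the total variation norm under Markov kernels; these are the same argument.
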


Finally, consider the following multivariate normal experiments with independent components $\widetilde{\mathscr{Q}} = \{\widetilde{\QQ}_{r, \bb{p}} : \bb{p} \in \Theta_b \}$ and $\mathscr{Q}^{\star} = \{\QQ_{r, \bb{p}}^{\star} : \bb{p} \in \Theta_b \}$, where $\widetilde{\QQ}_{r, \bb{p}}$ denotes the measure on $\R^d$ induced by the $\mathcal{N}_d [r \bb{\varrho}, r \mathrm{diag}(\bb{\varrho})]$ distribution, and $\QQ_{r, \bb{p}}^{\star}$ denotes the measure on $\R^d$ induced by the $\mathcal{N}_d [ \sqrt{r \bb{\varrho}}, \mathrm{diag}(1/4,\ldots,1/4) ]$ distribution.

A straightforward adaptation of the argument in \cite[Section 7]{MR1922539} shows that there exists a positive constant $C_b\in (0,\infty)$ that depends only on $b \in (0, \infty)$ such that $\Delta(\mathscr{Q},\widetilde{\mathscr{Q}}) \le C_b \, \sqrt{{d}/{r}}$ and $\Delta(\widetilde{\mathscr{Q}},\mathscr{Q}^{\star}) \le C_b \, {d}/{\sqrt{r}}$, using a variance stabilizing transformation, with proper adjustments to the deficiencies in~\eqref{eq:3}.

The following corollary is a direct consequence of the last two bounds, together with Proposition~\ref{prop:3} and the triangle inequality for the pseudo-metric $\Delta$.

\begin{corollary}
\label{cor:1}
With the same notation as in Proposition~\ref{prop:3}, there exists a positive constant $C_b \in (0,\infty)$ that depends only on $b \in (0, \infty)$ such that $\Delta(\mathscr{P},\widetilde{\mathscr{Q}}) \le C_b \,  {d}/{\sqrt{r}}$ and $\Delta(\mathscr{P},\mathscr{Q}^{\star}) \le C_b \, {d}/{\sqrt{r}}$.
\end{corollary}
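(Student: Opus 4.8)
The plan is to chain together the three deficiency bounds already in hand and invoke the triangle inequality satisfied by the Le\,Cam pseudo-metric $\Delta$. First I would record the three inputs: Proposition~\ref{prop:3} gives $\Delta(\mathscr{P},\mathscr{Q}) \le C_b\, d/\sqrt{r}$, while the variance-stabilizing argument adapted from \cite[Section 7]{MR1922539} supplies $\Delta(\mathscr{Q},\widetilde{\mathscr{Q}}) \le C_b\sqrt{d/r}$ and $\Delta(\widetilde{\mathscr{Q}},\mathscr{Q}^{\star}) \le C_b\, d/\sqrt{r}$. By enlarging the constant if necessary, I would take $C_b$ to be a common upper bound valid for all three statements, so that a single symbol suffices throughout.

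Next I would apply the triangle inequality for $\Delta$ twice. The bound on $\Delta(\mathscr{P},\widetilde{\mathscr{Q}})$ follows from
\[
\Delta(\mathscr{P},\widetilde{\mathscr{Q}}) \le \Delta(\mathscr{P},\mathscr{Q}) + \Delta(\mathscr{Q},\widetilde{\mathscr{Q}}) \le C_b\, \frac{d}{\sqrt{r}} + C_b\,\sqrt{\frac{d}{r}}.
\]
Since $d \ge 1$, one has $\sqrt{d/r} = \sqrt{d}/\sqrt{r} \le d/\sqrt{r}$, so the right-hand side is at most $2 C_b\, d/\sqrt{r}$, which establishes the first claim after relabeling the constant. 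For the second claim I would combine the bound just obtained with the third input,
\[
\Delta(\mathscr{P},\mathscr{Q}^{\star}) \le \Delta(\mathscr{P},\widetilde{\mathscr{Q}}) + \Delta(\widetilde{\mathscr{Q}},\mathscr{Q}^{\star}) \le 2 C_b\, \frac{d}{\sqrt{r}} + C_b\, \frac{d}{\sqrt{r}} = 3 C_b\, \frac{d}{\sqrt{r}},
\]
again absorbing the numerical factor into a renamed constant $C_b$ that depends only on $b$.

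There is essentially no analytic obstacle here: the heavy lifting is already carried out in Proposition~\ref{prop:3} (which itself rests on the Hellinger bound of Proposition~\ref{prop:2}) and in the variance-stabilizing adaptation, so the corollary reduces to bookkeeping. The only points requiring a moment's care are confirming that $\Delta$ genuinely obeys the triangle inequality—which holds because $\Delta$ is a pseudo-metric on equivalence classes of experiments, as noted after its definition in Section~\ref{sec:2}—and checking that the $\sqrt{d/r}$ contribution is dominated by $d/\sqrt{r}$ uniformly in $d \ge 1$, which is immediate. Hence the final constant may be chosen as a fixed numerical multiple of the $C_b$ appearing in the three input bounds, completing the argument.
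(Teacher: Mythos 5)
Your proposal is correct and follows exactly the route the paper intends: the corollary is stated there as a direct consequence of Proposition~\ref{prop:3}, the two bounds $\Delta(\mathscr{Q},\widetilde{\mathscr{Q}}) \le C_b\sqrt{d/r}$ and $\Delta(\widetilde{\mathscr{Q}},\mathscr{Q}^{\star}) \le C_b\,d/\sqrt{r}$, and the triangle inequality for the pseudo-metric $\Delta$. Your additional observation that $\sqrt{d/r} \le d/\sqrt{r}$ for $d \ge 1$ is the only bookkeeping step needed and is handled correctly.
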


\section{Discussion\label{sec:3}}

Given that error terms are explicit up to order $1/r$, Proposition~\ref{prop:1} is a significant refinement of earlier results \cite{MR0251828, MR1219279}, where there were no explicit error terms of any order.

To fully appreciate the significance of Proposition~\ref{prop:3}, it is important to understand the role of the Le\,Cam distance in the comparison of two statistical models. In particular, what renders this concept useful is the realization that two seemingly dissimilar statistical experiments can lead to asymptotically equivalent inferences. This equivalence is achieved through the utilization of Markov kernels to transfer information from one setting to the other.

A key example of this, due to~\citet{MR1425959}, is the asymptotic equivalence between the density estimation problem and the Gaussian white noise problem. As the number of observations approaches infinity, the Le\,Cam distance between these two experiments tends to zero. At the heart of this discovery lies the idea that information obtained by sampling observations from an unknown density function, and counting the observations that fall in the various boxes of a fine partition of the density's support, can be encoded using the increments of a properly scaled Brownian motion with a drift. A simpler proof of this asymptotic equivalence over a larger class of densities was given by \citet{MR2102503}, who integrated a Haar wavelet cascade scheme with coupling inequalities that link the binomial and univariate normal distributions at each stage. For a brief review of Le\,Cam's theory for the comparison of statistical models, refer to \cite{MR3850766}.

\vspace{-2mm}

\appendix

\setcounter{section}{0}
\renewcommand{\thelemma}{\Alph{lemma}}

\section{Appendix A: Proofs\label{App:1}}

\begin{proof}[Proof of Proposition~\ref{prop:1}.]
Fix $\gamma \in (0, \infty)$, define the set $B_{r,\bb{p}}(\gamma)$ as in \eqref{eq:2}, and take logarithms in~\eqref{eq:1}. One then has, for every $\bb{k} = (k_1, \ldots, k_d) \in B_{r,\bb{p}}(\gamma)$,
\[
\ln \{ P_{r,\bb{p}}(\bb{k}) \} = \ln \{\Gamma(r + \|\bb{k}\|_1)\} - \ln \{\Gamma (r) \} - \sum_{i=1}^d \ln (k_i!) + (r + \|\bb{k}\|_1) \ln (p_0) + \sum_{i=1}^d k_i \ln (\varrho_i).
\]
First call on Stirling's formula \cite[p.\,257]{MR0167642}, which implies that, for all $x \in (0, \infty)$ and $k \in \mathbb{N}$, one has
\begin{align*}
\ln \{ \Gamma (x) \}
&= \frac{1}{2} \ln(2\pi) + (x - \tfrac{1}{2}) \ln (x) - x + \frac{1}{12x} + \OO(x^{-3}), \\
\ln (k!)
&= \frac{1}{2} \ln(2\pi) + (k + \tfrac{1}{2}) \ln (k) - k + \frac{1}{12k} + \OO(k^{-3}).
\end{align*}
Upon substitution, and after simple algebraic manipulations, one finds that, for all $\bb{k} \in B_{r,\bb{p}}(\gamma)$,
\begin{align*}
\ln \{ P_{r,\bb{p}} (\bb{k}) \}
&= -\frac{d}{2} \ln(2\pi) + (r + \|\bb{k}\|_1) \ln (r + \|\bb{k}\|_1) - r \ln (r) - \sum_{i=1}^d k_i \ln (k_i) \notag \\
& \quad - \frac{1}{2} \ln (r + \|\bb{k}\|_1) + \frac{1}{2} \ln (r) - \frac{1}{2} \sum_{i=1}^d \ln (k_i) + (r + \|\bb{k}\|_1) \ln (p_0) + \sum_{i=1}^d k_i \ln (\varrho_i) \notag \\
&\quad \quad + \frac{1}{12 r} \left(\frac{r}{r + \|\bb{k}\|_1} - 1 - \sum_{i=1}^d \frac{r}{k_i}\right) + \OO\left[\frac{1}{r^3} \left\{\frac{r^3}{(r + \|\bb{k}\|_1)^3} + 1 + \sum_{i=1}^d \frac{r^3}{k_i^3}\right\}\right].
\end{align*}

Given that, for each $i \in \{ 1, \ldots, d \}$, one has
\[
k_i = r \varrho_i  \left( 1 + \frac{\delta_{i,k_i}}{\sqrt{r} \varrho_i} \right), \quad r + \|\bb{k}\|_1  = \frac{r}{p_0} \left(1 + \sum_{i=1}^d \frac{\delta_{i,k_i}}{\sqrt{r} / p_0 }\right),
\]
further simplifications yield
\begin{equation}
\label{eq:4}
\begin{aligned}
\ln \{ P_{r,\bb{p}} (\bb{k}) \}
&= -\frac{1}{2} \ln \left \{ \frac{(2\pi r)^d}{p_0} \prod_{i=1}^d \varrho_i \right \} + A_r - B_r - C_r - D_r \\
& \quad+ \frac{1}{12 r} \left\{ p_0 \left(1 + \sum_{i=1}^d \frac{\delta_{i, k_i}}{\sqrt{r} / p_0}\right)^{-1} - 1 - \sum_{i=1}^d \varrho_i^{-1} \left(1 + \frac{\delta_{i, k_i}}{\sqrt{r} \varrho_i}\right)^{-1}\right\} + \OO_{d,\bb{p},\gamma}(r^{-3}),
\end{aligned}
\end{equation}
where
\[
A_r = \frac{r}{p_0} \left(1 + \sum_{i=1}^d \frac{\delta_{i,k_i}}{\sqrt{r} / p_0}\right) \ln \left(1 + \sum_{i=1}^d \frac{\delta_{i, k_i}}{\sqrt{r} / p_0}\right), \quad B_r = r \sum_{i=1}^d \varrho_i \left(1 + \frac{\delta_{i,k_i}}{\sqrt{r} \varrho_i}\right) \ln \left(1 + \frac{\delta_{i ,k_i}}{\sqrt{r} \varrho_i} \right),
\]
\[
C_r = \frac{1}{2} \ln \left(1 + \sum_{i=1}^d \frac{\delta_{i, k_i}}{\sqrt{r} / p_0}\right), \quad D_r = \frac{1}{2} \sum_{i=1}^d \ln \left(1 + \frac{\delta_{i, k_i}}{\sqrt{r} \varrho_i}\right).
\]

Next, call on the following Taylor expansions, valid for a suitable $\eta \in (0, 1)$ and all $x \in \mathbb{R}$ with $|x| < \eta < 1$:
\begin{equation}
\label{eq:5}
(1 + x) \ln(1 + x) = x + \frac{x^2}{2} - \frac{x^3}{6} + \frac{x^4}{12} + \OO \big \{(1 - \eta)^{-4} x^5 \big \},
\end{equation}
\begin{equation}
\label{eq:6}
\frac{1}{2} \ln(1 + x) = \frac{x}{2} - \frac{x^2}{4} + \OO \big \{(1 - \eta)^{-3} x^3 \big \}.
\end{equation}

Applying \eqref{eq:5} to $A_r$ and $B_r$, one finds, after simplification,
\begin{multline*}
A_r = \sum_{i=1}^d (k_i - r \varrho_i) + \frac{p_0}{2}\sum_{i, j=1}^d \delta_{i, k_i} \delta_{j, k_j} - \frac{p_0^2}{6\sqrt{r}} \sum_{i, j, \ell=1}^d \delta_{i, k_i} \delta_{j,k_j} \delta_{\ell, k_\ell} + \frac{p_0^3}{12r} \sum_{i, j, \ell, m=1}^d  \delta_{i, k_i} \delta_{j, k_j} \delta_{\ell, k_\ell} \delta_{m, k_m} + \OO_{d, \bb{p}, \gamma} \big( r^{-3/2} \|\bb{\delta}_{\bb{k}}\|_1^5 \big)
\end{multline*}
and
\begin{align*}
B_r = \sum_{i=1}^d (k_i - r \varrho_i) + \frac{1}{2} \sum_{i=1}^d \varrho_i^{-1} \delta_{i,k_i}^2  - \frac{1}{6\sqrt{r}} \sum_{i=1}^d \varrho_i^{-2} \delta_{i, k_i}^3 + \frac{1}{12r} \sum_{i=1}^d \varrho_i^{-3} \delta_{i,k_i}^4 + \OO_{d, \bb{p}, \gamma}\left( r^{-3/2} \|\bb{\delta}_{\bb{k}}\|_1^5 \right).
\end{align*}

Similarly, applying \eqref{eq:6} to $C_r$ and $D_r$, one finds, after simplification,
\[
C_r = \frac{p_0}{2 \sqrt{r}} \sum_{i=1}^d \delta_{i,k_i} - \frac{p_0^2}{4 r} \sum_{i,j=1}^d \delta_{i,k_i} \delta_{j,k_j} + \OO_{d,\bb{p},\gamma}\ \big( r^{-3/2} \|\bb{\delta}_{\bb{k}} \|_1^3 \big)
\]
and
\[
D_r =  \frac{1}{2 \sqrt{r}} \sum_{i=1}^d \varrho_i^{-1} \delta_{i,k_i} - \frac{1}{4 r} \sum_{i=1}^d \varrho_i^{-2} \delta_{i,k_i}^2 + \OO_{d,\bb{p},\gamma} \big( r^{-3/2} \|\bb{\delta}_{\bb{k}}\|_1^3 \big).
\]

Substituting these four expressions into~\eqref{eq:4}, and exploiting the explicit form for $\Sigma^{-1}$ given in Section~\ref{sec:1} for the joint density $\phi_{\Sigma}(\bb{\delta}_{\bb{k}})$, one can express the difference $\ln \{ P_{r,\bb{p}} (\bb{k}) \} - \ln \{r^{-d/2} \phi_{\Sigma}(\bb{\delta}_{\bb{k}}) \}$, for every $\bb{k} = (k_1, \ldots, k_d) \in B_{r,\bb{p}}(\gamma)$, as
\begin{multline*}
\frac{1}{6 \sqrt{r}} \sum_{i, j, \ell=1}^d \delta_{i,k_i} \delta_{j,k_j} \delta_{\ell,k_\ell} \big( \varrho_i^{-2} \bb{1}_{\{i = j = \ell\}} - p_0^2 \big) - \frac{1}{12 r} \sum_{i, j, \ell, m=1}^d  \delta_{i,k_i} \delta_{j,k_j} \delta_{\ell,k_\ell} \delta_{m,k_m} \big( \varrho_i^{-3} \bb{1}_{\{i = j = \ell = m\}} - p_0^3 \big) \\
- \frac{1}{2 \sqrt{r}}\sum_{i=1}^d \delta_{i, k_i} (\varrho_i^{-1} + p_0) + \frac{1}{4r} \sum_{i,j=1}^d \delta_{i,k_i} \delta_{j,k_j}  (\varrho_i^{-2} \bb{1}_{\{i = j\}} + p_0^2) + \frac{1}{12 r} \left(p_0 - 1 - \sum_{i=1}^d \varrho_i^{-1}\right) + \OO_{d,\bb{p},\gamma} \big\{ r^{-3/2} (1 + \|\bb{\delta}_{\bb{k}}\|_1^5) \big\}.
\end{multline*}
After grouping the terms with respect to the powers of $r$, one can conclude.\hfill $\Box$
\end{proof}

\begin{proof}[Proof of Proposition~\ref{prop:2}.]
Recall the definition of the bulk $B_{r,\bb{p}} (1)$ from \eqref{eq:2} and define the set
\[
\widetilde{B}_{r,\bb{p}}(1) = \big \{\bb{x}\in \R^d : \exists_{(\bb{k},\bb{u})\in B_{r,\bb{p}}(1) \times (-1/2,1/2)^d} \; \bb{x} = \bb{k} + \bb{u} \big \}.
\]
By the bound on the Hellinger distance found at the bottom of p.~726 of \cite{MR1922539}, it is already known that
\begin{equation}
\label{eq:7}
\mathcal{H} \Big(\widetilde{\PP}_{r,\bb{p}},\QQ_{r,\bb{p}} \Big) \le \sqrt{2 \, \PP \big \{\bb{X}\in \widetilde{B}_{r,\bb{p}}^c(1) \big \} + \EE\left[\ln\left\{\frac{\rd \widetilde{\PP}_{r,\bb{p}}}{\rd \QQ_{r,\bb{p}}}(\bb{X})\right\} \, \ind_{\widetilde{B}_{r,\bb{p}}(1)}(\bb{X})\right]}.
\end{equation}

For each $i \in \{ 1, \ldots, d \}$, the marginal distribution of $K_i$ is $\mathcal{N\hspace{-0.8mm}M} [r, p_i / (p_i + p_0)]$, so that $\EE(K_i) = r \varrho_i$ and $\Var(K_i) = r \varrho_i (1 + \varrho_i)$. By applying a union bound, one finds
\begin{align*}
\PP \Big \{\bb{X}\in \widetilde{B}_{r,\bb{p}}^c(1) \Big \}
&\le \sum_{i=1}^d \PP \big (|K_i - r \varrho_i| > \varrho_i r^{2/3} \big) + \PP\left(\sum_{i=1}^d |K_i - r \varrho_i| > p_0^{-1} r^{2/3}\right) \\
&\le \sum_{i=1}^d \PP\left(\frac{|K_i - r \varrho_i|}{\sqrt{r \varrho_i (1 + \varrho_i)}} > \frac{\varrho_i r^{2/3}}{\sqrt{r \varrho_i (1 + \varrho_i)}}\right) + \sum_{i=1}^d \PP\left(\frac{|K_i - r \varrho_i|}{\sqrt{r \varrho_i (1 + \varrho_i)}} > \frac{p_0^{-1} r^{2/3}}{d \sqrt{r \varrho_i (1 + \varrho_i)}}\right) .
\end{align*}
Applying a concentration inequality to each of the right-hand summands, one deduces that, for sufficiently large $r$,
\begin{equation}
\label{eq:8}
\begin{aligned}
\PP \Big \{\bb{X}\in \widetilde{B}_{r,\bb{p}}^c(1) \Big \}
&\le 2 d \, \max_{i\in \{1,\ldots,d\}} \PP\left(\frac{|K_i - r \varrho_i|}{\sqrt{r \varrho_i (1 + \varrho_i)}} > \frac{1}{d} \sqrt{\min(p_i, p_i^{-1})} \, r^{1/6}\right) \\
&\le 100 \, d \exp\left\{-\frac{\min(p_1,\ldots,p_d)}{100 \, d^{\hspace{0.2mm}2} \max(p_1,\ldots,p_d)} \, r^{1/3}\right\}.
\end{aligned}
\end{equation}

For the expectation in~\eqref{eq:7}, if $\widetilde{P}_{r,\bb{p}}(\bb{x})$ denotes the density function associated with $\widetilde{\PP}_{r,\bb{p}}$ at any $\bb{x} \in \mathbb{R}^d$, i.e., it is equal to $P_{r,\bb{p}}(\bb{k})$ whenever $\bb{k}\in \N_0^d$ is closest to $\bb{x}$, then one can express
\begin{equation}
\label{eq:9}
\EE\left[\ln\left\{\frac{\rd \widetilde{\PP}_{r,\bb{p}}}{\rd \QQ_{r,\bb{p}}}(\bb{X})\right\} \, \ind_{\widetilde{B}_{r,\bb{p}}(1)}(\bb{X})\right]
= \EE\left[\ln\left\{\frac{\widetilde{P}_{r,\bb{p}}(\bb{X})}{r^{-d/2} \phi_{\Sigma}(\bb{\delta}_{\bb{X}})}\right\} \, \ind_{\widetilde{B}_{r,\bb{p}}(1)}(\bb{X})\right]
\equiv (\mathrm{I}) + (\mathrm{II}) + (\mathrm{III})
\end{equation}
as a sum of three terms, namely
\begin{align*}
\mathrm{(I)} = \EE\left[\ln\left\{\frac{P_{r,\bb{p}}(\bb{K})}{r^{-d/2} \phi_{\Sigma}(\bb{\delta}_{\bb{K}})}\right\} \, \ind_{B_{r,\bb{p}}(1)}(\bb{K})\right] , \quad
\mathrm{(II)} = \EE\left[\ln\left\{\frac{r^{-d/2} \phi_{\Sigma}(\bb{\delta}_{\bb{K}})}{r^{-d/2} \phi_{\Sigma}(\bb{\delta}_{\bb{X}})}\right\} \, \ind_{B_{r,\bb{p}}(1)}(\bb{K}) \right],
\end{align*}
and
\[
\mathrm{(III)} =  \EE\left[\ln\left\{\frac{P_{r,\bb{p}}(\bb{K})}{r^{-d/2} \phi_{\Sigma}(\bb{\delta}_{\bb{X}})}\right\} \, \big \{\ind_{\widetilde{B}_{r,\bb{p}}(1)}(\bb{X}) - \ind_{B_{r,\bb{p}}(1)}(\bb{K}) \big \}\right] .
\]
By Proposition~\ref{prop:1}, one has
\begin{align}
\label{eq:10}
(\mathrm{I}) & = - r^{-1/2} \sum_{i=1}^d \frac{1}{2} \, \EE \big \{\delta_{i,K_i} \ind_{B_{r,\bb{p}}(1)}(\bb{K}) \big\} (\varrho_i^{-1} - p_0 ) + r^{-1/2} \sum_{i,j,\ell=1}^d \frac{1}{6} \, \EE \big \{\delta_{i,K_i} \delta_{j,K_j} \delta_{\ell,K_\ell} \ind_{B_{r,\bb{p}}(1)}(\bb{K}) \big \} (\varrho_i^{-2} \bb{1}_{\{i = j = \ell\}} - p_0^2 ) \notag \\
 & \quad + r^{-1} \, \OO ( T_{\bb{\varrho}, p_0}) + \OO_{d,\bb{p}} (r^{-3/2}),
\end{align}
where
\begin{multline*}
T_{\bb{\varrho}, p_0} = 1 + \sum_{i=1}^d \varrho_i^{-1} + \sum_{i,j=1}^d \{\EE(\delta_{i,K_i}^2)\}^{1/2} \{\EE(\delta_{j,K_j}^2)\}^{1/2} (\varrho_i^{-2} \bb{1}_{\{i = j\}} + p_0^2) \Bigg. \\ + \sum_{i,j,\ell,m=1}^d \{\EE(\delta_{i,K_i}^4)\}^{1/4} \{\EE(\delta_{j,K_j}^4)\}^{1/4} \{\EE(\delta_{\ell,K_{\ell}}^4)\}^{1/4} \{\EE(\delta_{m,K_m}^4)\}^{1/4} \Bigg. \big (\varrho_i^{-3} \bb{1}_{\{i = j = \ell = m\}} + p_0^3 \big).
\end{multline*}

The term $\OO ( T_{\bb{\varrho}, p_0})$ is crucial to get the correct upper bound on the Le\,Cam distance in Proposition~\ref{prop:3}. By Lemma~\ref{lem:A} in Appendix~\ref{App:2}, one has
\[
\sum_{i,j=1}^d \{\EE(\delta_{i,K_i}^2)\}^{1/2} \{\EE(\delta_{j,K_j}^2)\}^{1/2} (\varrho_i^{-2} \bb{1}_{\{i = j\}} + p_0^2) \le \sum_{i,j=1}^d (\varrho_i p_0^{-1})^{1/2} (\varrho_j p_0^{-1})^{1/2} (\varrho_i^{-2} \bb{1}_{\{i = j\}} + p_0^2),
\]
and the right-hand term can be rewritten and further bounded above as follows:
\[
\sum_{i=1}^d p_i^{-1} + \sum_{i,j=1}^d (p_i p_j)^{1/2} = \sum_{i=1}^d p_i^{-1} + \left(\sum_{i=1}^d p_i^{1/2}\right)^2
\le \sum_{i=1}^d p_i^{-1} + d \sum_{i=1}^d p_i \le d + \sum_{i=1}^d p_i^{-1}.
\]
Similarly, by Lemma~\ref{lem:A} in Appendix~\ref{App:2}, one has
\begin{multline*}
\sum_{i,j,\ell,m=1}^d \{\EE(\delta_{i,K_i}^4)\}^{1/4} \{\EE(\delta_{j,K_j}^4)\}^{1/4} \{\EE(\delta_{\ell,K_{\ell}}^4)\}^{1/4} \{\EE(\delta_{m,K_m}^4)\}^{1/4} (\varrho_i^{-3} \bb{1}_{\{i = j = \ell = m\}} + p_0^3) \\
\le \sum_{i,j,\ell,m=1}^d (4 \varrho_i^2 p_0^{-2})^{1/4} (4 \varrho_j^2 p_0^{-2})^{1/4} (4 \varrho_{\ell}^2 p_0^{-2})^{1/4} (4 \varrho_m^2 p_0^{-2})^{1/4} (\varrho_i^{-3} \bb{1}_{\{i = j = \ell = m\}} + p_0^3),
\end{multline*}
and the right-hand term can be rewritten and further bounded above as follows:
\begin{align*}
4 p_0^{-1} \left\{\sum_{i=1}^d p_i^{-1} + \sum_{i,j,\ell,m=1}^d (p_i p_j p_{\ell} p_m)^{1/2}\right\} & = 4 p_0^{-1} \left\{\sum_{i=1}^d p_i^{-1} + \left(\sum_{i=1}^d p_i^{1/2}\right)^4\right\} \\
& \le 4 p_0^{-1} \left\{\sum_{i=1}^d p_i^{-1} + d^{\hspace{0.2mm}2} \left(\sum_{i=1}^d p_i\right)^2\right\}
\le 4 p_0^{-1} \left(d^{\hspace{0.2mm}2} + \sum_{i=1}^d p_i^{-1}\right).
\end{align*}
Therefore, one can rewrite~\eqref{eq:10} as
\begin{multline}
\label{eq:11}
(\mathrm{I}) = - r^{-1/2} \, \sum_{i=1}^d \frac{1}{2} \, \EE \big \{ \delta_{i,K_i} \ind_{B_{r,\bb{p}}(1)}(\bb{K}) \big \} \big( \varrho_i^{-1} - p_0 \big) \\
+ r^{-1/2} \sum_{i, j, \ell=1}^d \frac{1}{6} \, \EE \big \{\delta_{i,K_i} \delta_{j,K_j} \delta_{\ell,K_\ell} \ind_{B_{r,\bb{p}}(1)}(\bb{K}) \big\} \big(\varrho_i^{-2} \bb{1}_{\{i = j = \ell\}} - p_0^2 \big)
 + (r p_0)^{-1} \, \OO \left(d^{\hspace{0.2mm}2} + \sum_{i=1}^d p_i^{-1}\right).
\end{multline}
By estimating the two expectations using Lemma~\ref{lem:B} in Appendix~\ref{App:2}, one finds
\begin{equation}
\label{eq:12}
\begin{aligned}
(\mathrm{I})
&= r^{-1/2} \, \OO_{d,\bb{p}} \left( p_0^{-1} \left[\PP \big\{ \bb{K}\in B_{r,\bb{p}}^c(1) \big \}\right]^{1/2}\right) \\[1mm]
&\quad+ r^{-1/2} \sum_{i,j,\ell=1}^d \frac{1}{6} \, r^{-1/2}
\begin{pmatrix}
2 \varrho_i \varrho_j \varrho_{\ell} + \ind_{\{i = j\}} \varrho_i \varrho_{\ell} + \ind_{\{j = \ell\}} \varrho_i \varrho_j \\
+ \ind_{\{i = \ell\}} \varrho_j \varrho_{\ell} + \ind_{\{i = j = \ell\}} \varrho_i
\end{pmatrix}
(\varrho_i^{-2} \bb{1}_{\{i = j = \ell\}} - p_0^2 ) \\
&\quad \quad + r^{-1/2} \, \OO_{d,\bb{p}} \left(p_0^{-3} \left[\PP \big\{ \bb{K}\in B_{r,\bb{p}}^c(1) \big \} \right]^{1/4} \right) + (r p_0)^{-1} \, \OO \left( d^{\hspace{0.2mm}2} + \sum_{i=1}^d p_i^{-1}\right) \\
&= r^{-1/2} \, \OO_{d,\bb{p}} \left(p_0^{-3} \left[\PP \big \{ \bb{K}\in B_{r,\bb{p}}^c(1) \big\} \right]^{1/4}\right) + (r p_0)^{-1} \,  \OO \left(d^{\hspace{0.2mm}2} + \sum_{i=1}^d p_i^{-1}\right).
\end{aligned}
\end{equation}

For the term $(\mathrm{II})$, note that
\begin{align*}
\ln\left\{\frac{r^{-d/2} \phi_{\Sigma}(\bb{\delta}_{\bb{K}})}{r^{-d/2} \phi_{\Sigma}(\bb{\delta}_{\bb{X}})}\right\}
&= r^{-1} (\bb{X} - r \bb{\varrho})^{\top} \Sigma^{-1} (\bb{X} - r \bb{\varrho})/2 - r^{-1} (\bb{K} - r \bb{\varrho})^{\top} \Sigma^{-1} (\bb{K} - r \bb{\varrho})/2 \\[-1mm]
&= r^{-1} (\bb{X} - \bb{K})^{\top} \Sigma^{-1} (\bb{X} - \bb{K})/2 + r^{-1} (\bb{X} - \bb{K})^{\top} \Sigma^{-1} (\bb{K} - r \bb{\varrho}).
\end{align*}
With the assumption that $\bb{K}$ and $\add{\bb{U} =} \bb{X} - \bb{K}\sim \mathcal{U} (-1/2,1/2)^d$ are independent, and using Lemma~\ref{lem:B} and the explicit form for $\Sigma^{-1}$ given in Section~\ref{sec:1}, one finds
\begin{align}
\label{eq:13}
|(\mathrm{II})|
&= \left|\EE\left[ \big \{ r^{-1} (\bb{X} - \bb{K})^{\top} \Sigma^{-1} (\bb{X} - \bb{K})/2 + r^{-1} (\bb{X} - \bb{K})^{\top} \Sigma^{-1} (\bb{K} - r \bb{\varrho}) \big \} \ind_{B_{r,\bb{p}}(1)}(\bb{K})\right]\right| \notag \\
&\le r^{-1} \frac{1}{2} \, \EE \big \{|(\bb{X} - \bb{K})^{\top} \Sigma^{-1} (\bb{X} - \bb{K})| \big\} + r^{-1/2} \, \EE \big \{|(\bb{X} - \bb{K})^{\top} \Sigma^{-1} \bb{\delta}_{\bb{K}}| \ind_{B_{r,\bb{p}}(1)}(\bb{K}) \big \} \notag \\[-1mm]
&\le r^{-1} \sum_{i,j=1}^d \frac{1}{8} \big| \varrho_i^{-1} \ind_{\{i=j\}} - p_0 \big| + r^{-1/2} \sum_{i,j=1}^d \frac{1}{2} \big| \varrho_i^{-1} \ind_{\{i=j\}} - p_0 \big| \, \EE\big\{|\delta_{j,K_j}| \ind_{B_{r,\bb{p}}(1)}(\bb{K})\big\} \notag \\[-1mm]
&\le r^{-1} \sum_{i,j=1}^d \frac{1}{8} \big| \varrho_i^{-1} \ind_{\{i=j\}} - p_0 \big| + r^{-1/2} \sum_{i,j=1}^d \frac{1}{2} \big| \varrho_i^{-1} \ind_{\{i=j\}} - p_0 \big| \sqrt{\EE\big\{\delta_{j,K_j}^2 \ind_{B_{r,\bb{p}}(1)}(\bb{K})\big\}} \notag \\[-1mm]
&= r^{-1} \, \OO\left(d^{\hspace{0.2mm}2} + \sum_{i=1}^d p_i^{-1}\right) + \OO_{d,\bb{p}}\left(r^{-1/2} \left[\PP \big \{\bb{K}\in B_{r,\bb{p}}^c(1) \big \}\right]^{1/2}\right).
\end{align}

For the term $(\mathrm{III})$, one can use the following very rough bound from Proposition~\ref{prop:1}, viz.
\[
\ln\left\{\frac{P_{r,\bb{p}}(\bb{K})}{r^{-d/2} \phi_{\Sigma}(\bb{\delta}_{\bb{K}})}\right\} \, \ind_{B_{r,\bb{p}}(1)}(\bb{K}) = \OO_{d,\bb{p}}(1).
\]
Then
\begin{equation}
\label{eq:14}
(\mathrm{III}) = \OO_{d,\bb{p}}\left[\EE\left\{|\ind_{\widetilde{B}_{r,\bb{p}}(1)}(\bb{X}) - \ind_{B_{r,\bb{p}}(1)}(\bb{K})|\right\}\right].
\end{equation}
Now, using the concentration inequality~\eqref{eq:8}, one has the exponential bound
\begin{align*}
\EE \Big \{|\ind_{\widetilde{B}_{r,\bb{p}}(1)}(\bb{X}) - \ind_{B_{r,\bb{p}}(1)}(\bb{K})| \Big \}
&= \EE \Big \{|\ind_{\widetilde{B}_{r,\bb{p}}(1) \times B_{r,\bb{p}}^c(1)}(\bb{X},\bb{K}) - \ind_{\widetilde{B}_{r,\bb{p}}^c(1) \times B_{r,\bb{p}}(1)}(\bb{X},\bb{K})| \Big \} \\[1.5mm]
&\le \PP \big \{\bb{K}\in B_{r,\bb{p}}^c(1) \big \} + \PP \big \{\bb{X}\in \widetilde{B}_{r,\bb{p}}^c(1) \big \} \\[1mm]
&= \OO_d\left[\exp\left\{-\frac{\min(p_1,\ldots,p_d)}{100 \, d^{\hspace{0.2mm}2} \max(p_1,\ldots,p_d)} \, r^{1/3}\right\}\right].
\end{align*}
Therefore, by putting the estimates \eqref{eq:12}--\eqref{eq:14} back into~\eqref{eq:9}, one has, as $r\to \infty$,
\begin{align*}
\EE\left[\ln\left\{\frac{\rd \widetilde{\PP}_{r,\bb{p}}}{\rd \QQ_{r,\bb{p}}}(\bb{X})\right\} \, \ind_{\widetilde{B}_{r,\bb{p}}(1)}(\bb{X})\right]
&= (\mathrm{I}) + (\mathrm{II}) + (\mathrm{III}) = (r p_0)^{-1} \, \OO \left( d^{\hspace{0.2mm}2} + \sum_{i=1}^d p_i^{-1} \right).
\end{align*}
This result, together with the concentration inequality~\eqref{eq:8} applied in~\eqref{eq:7}, leads to the desired conclusion.\hfill $\Box$
\end{proof}

\begin{proof}[Proof of Proposition~\ref{prop:3}.]
By Proposition~\ref{prop:2}, one obtains the desired upper bound on $\delta(\mathscr{P},\mathscr{Q})$ by choosing the Markov kernel $T_1^{\star}$ that adds the uniform jittering $\bb{U}$ to $\bb{K}$. The latter is defined, for arbitrary $\bb{k}\in \N_0^d$ and $B\in \mathscr{B}(\R^d)$, by
\[
T_1^{\star}(\bb{k},B) = \int_{(-1/2, 1/2)^d} \ind_B(\bb{k} + \bb{u}) \rd \bb{u} .
\]
To get the upper bound on $\delta(\mathscr{Q},\mathscr{P})$, it suffices to consider a Markov kernel $T_2^{\star}$ that inverts the effect of $T_1^{\star}$, i.e., rounding off every component of $\bb{X}\sim \mathcal{N}_d (r \bb{\varrho}, r \Sigma)$ to the nearest integer. Then, as explained in Section~5 of \cite{MR1922539}, one has
\begin{align*}
\delta(\mathscr{Q},\mathscr{P})
&\le \bigg\|\PP_{r,\bb{p}} - \int_{\R^d} T_2^{\star}(\bb{x}, \cdot \, ) \, \QQ_{r,\bb{p}}(\rd \bb{x})\bigg\| = \bigg\|\int_{\R^d} T_2^{\star}(\bb{x}, \cdot \, ) \int_{\N_0^d} T_1^{\star}(\bb{k}, \rd \bb{x}) \, \PP_{r,\bb{p}}(\rd \bb{k}) - \int_{\R^d} T_2^{\star}(\bb{x}, \cdot \, ) \, \QQ_{r,\bb{p}}(\rd \bb{x})\bigg\| \\
&\le \bigg\|\int_{\N_0^d} T_1^{\star}(\bb{k}, \cdot \, ) \, \PP_{r,\bb{p}}(\rd \bb{k}) - \QQ_{r,\bb{p}}\bigg\|,
\end{align*}
yielding the same upper bound as on $\delta(\mathscr{P},\mathscr{Q})$ by Proposition~\ref{prop:2}. This concludes the proof.\hfill $\Box$
\end{proof}

\section{Appendix B: Moment estimates\label{App:2}}

The first lemma below gives some central moments of order 1--4 for the negative multinomial distribution. These moment formulas can be proved using the general moment formulas recently derived in \cite{arXiv:2209.04733} or using a symbolic calculator like Mathematica. The lemma is used to estimate the terms of order $r^{-1}$ in~\eqref{eq:10} of the proof of Proposition~\ref{prop:2}, and also as a preliminary result for the proof of Lemma~\ref{lem:B} below.

\begin{lemma}\label{lem:A}
If $\bb{K} = (K_1,\ldots,K_d)\sim \mathcal{N\hspace{-0.8mm}M} (r,\bb{p})$ according to~\eqref{eq:1}, then, for all integers $i, j, \ell \in \{1, \ldots, d\}$, one has
\begin{align*}
&\EE(\delta_{i,K_i}) = 0, \quad \EE(\delta_{i,K_i} \delta_{j,K_j}) = \varrho_i \ind_{\{i = j\}} + \varrho_i \varrho_j, \\[1mm]
&\EE(\delta_{i,K_i} \delta_{j,K_j} \delta_{\ell,K_{\ell}}) = r^{-1/2} \left(2 \varrho_i \varrho_j \varrho_{\ell} + \ind_{\{i = j\}} \varrho_i \varrho_{\ell} + \ind_{\{j = \ell\}} \varrho_i \varrho_j + \ind_{\{i = \ell\}} \varrho_j \varrho_{\ell} + \ind_{\{i = j = \ell\}} \varrho_i\right), \\[1mm]
&\EE(\delta_{i,K_i}^4) = 3 \varrho_i^2 \, (1 + \varrho_i)^2 + \OO(r^{-1}). \notag
\end{align*}
In particular, for sufficiently large values of $r$, one has, for all $i \in \{ 1, \ldots, d\}$, $\EE (\delta_{i,K_i}^2 ) \le \varrho_i p_0^{-1}$ and $\EE (\delta_{i,K_i}^4) \le 4 \varrho_i^2 p_0^{-2}$.
\end{lemma}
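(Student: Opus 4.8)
The plan is to reduce each identity to a central moment of $\bb{K}$ and to read these off from the cumulant generating function. Since $\delta_{i,K_i} = (K_i - r\varrho_i)/\sqrt{r}$ is an affine function of $K_i$ centered at its mean $r\varrho_i$, the quantity $\EE(\delta_{i,K_i} \delta_{j,K_j} \delta_{\ell,K_\ell})$ is exactly $r^{-3/2}$ times the mixed central moment $\EE\{(K_i - r\varrho_i)(K_j - r\varrho_j)(K_\ell - r\varrho_\ell)\}$, and likewise for the other orders. I would therefore begin from the moment generating function of the $\mathcal{N\hspace{-0.8mm}M}(r,\bb{p})$ law, namely $M(\bb{s}) = p_0^r (1 - \sum_{i=1}^d p_i e^{s_i})^{-r}$, whose logarithm is the cumulant generating function $\psi(\bb{s}) = -r \ln\{1 - \sum_{i=1}^d \varrho_i (e^{s_i} - 1)\}$. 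The decisive structural fact is that $\psi$ equals $r$ times a function free of $r$, so every joint cumulant of $\bb{K}$ is exactly $r$ times a constant.

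With this in hand, I would compute the needed low-order cumulants by differentiating $\psi$ at $\bb{s} = \bb{0}$ and convert them to central moments via the standard relations: the first central moment vanishes, the second and third (mixed) central moments equal the corresponding cumulants, and the fourth central moment equals $\kappa_4 + 3 \kappa_2^2$. The first two identities drop out of $\partial_i \psi|_0 = r \varrho_i$ and $\partial_j \partial_i \psi|_0 = r(\varrho_i \ind_{\{i=j\}} + \varrho_i \varrho_j)$. For the third, the triple derivative $\partial_\ell \partial_j \partial_i \psi|_0$ collapses to the symmetric combination $r(2 \varrho_i \varrho_j \varrho_\ell + \ind_{\{i=j\}} \varrho_i \varrho_\ell + \ind_{\{j=\ell\}} \varrho_i \varrho_j + \ind_{\{i=\ell\}} \varrho_j \varrho_\ell + \ind_{\{i=j=\ell\}} \varrho_i)$, and dividing by $r^{3/2}$ leaves the advertised $r^{-1/2}$ prefactor. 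For the fourth moment I would not even need the explicit value of $\kappa_4$: since $\kappa_4 = \OO(r)$ while $\kappa_2 = r \varrho_i (1 + \varrho_i)$, one gets $\EE(\delta_{i,K_i}^4) = r^{-2}(\kappa_4 + 3 \kappa_2^2) = 3 \varrho_i^2 (1 + \varrho_i)^2 + \OO(r^{-1})$ at once.

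The two bounds are then elementary. Writing $1 + \varrho_i = (p_0 + p_i)/p_0 \le p_0^{-1}$ since $p_0 + p_i \le 1$, the exact identity $\EE(\delta_{i,K_i}^2) = \varrho_i (1 + \varrho_i)$ already gives $\EE(\delta_{i,K_i}^2) \le \varrho_i p_0^{-1}$ for every $r$. The same inequality yields $3 \varrho_i^2 (1 + \varrho_i)^2 \le 3 \varrho_i^2 p_0^{-2}$, so absorbing the vanishing $\OO(r^{-1})$ remainder into the slack between $3 \varrho_i^2 p_0^{-2}$ and $4 \varrho_i^2 p_0^{-2}$ gives $\EE(\delta_{i,K_i}^4) \le 4 \varrho_i^2 p_0^{-2}$ once $r$ is large enough; it is exactly the fourth-moment bound that forces the ``sufficiently large $r$'' qualifier.

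The only genuinely tedious step is the bookkeeping in the third- and fourth-order derivatives of $\psi$, where marshalling the many product-rule terms into the displayed symmetric form with the correct indicators is error-prone. I would lighten this either by invoking the Gamma--Poisson mixture representation of the negative multinomial law, in which $G \sim \mathrm{Gamma}(r, 1)$ and $K_1, \ldots, K_d$ are conditionally independent with $K_i \mid G \sim \mathrm{Poisson}(\varrho_i G)$, reducing each computation to the law of total cumulance applied to familiar Poisson and Gamma moments; or simply by citing the general moment formulas of \cite{arXiv:2209.04733} or checking the four identities with a symbolic algebra system, as the statement itself already proposes.
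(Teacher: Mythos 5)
Your argument is correct, and it is genuinely more self-contained than what the paper does: the paper offers no derivation of Lemma~\ref{lem:A} at all, simply remarking that the formulas follow from the general moment expressions in \cite{arXiv:2209.04733} or from a symbolic computation. Your route through the cumulant generating function $\psi(\bb{s}) = -r\ln\{1 - \sum_{i=1}^d \varrho_i(e^{s_i}-1)\}$ is sound: the key observation that $\psi$ is exactly $r$ times an $r$-free function makes every joint cumulant linear in $r$, the second and third mixed central moments coincide with the corresponding cumulants, and $\mu_4 = \kappa_4 + 3\kappa_2^2$ with $\kappa_4 = \OO(r)$ and $\kappa_2 = r\varrho_i(1+\varrho_i)$ delivers the fourth-moment expansion without computing $\kappa_4$ explicitly --- a nice economy. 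I verified the triple derivative at $\bb{0}$; it does collapse to the displayed symmetric combination (your term $\ind_{\{i=\ell\}}\varrho_i\varrho_j$ agrees with the paper's $\ind_{\{i=\ell\}}\varrho_j\varrho_\ell$ on the event $i=\ell$). The closing bounds via $1+\varrho_i = (p_0+p_i)/p_0 \le p_0^{-1}$ are also right, including the observation that only the fourth-moment bound needs $r$ large. What your approach buys is a verifiable, citation-free proof; what the paper's approach buys is brevity, outsourcing the bookkeeping you correctly flag as the only error-prone step. Either is acceptable; yours is arguably preferable for a self-contained appendix.
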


One can also estimate the central moments of Lemma~\ref{lem:A} on various measurable events. The result below is used to estimate the terms of order $r^{-1/2}$ in~\eqref{eq:11} and one expectation in \eqref{eq:13}, both in the proof of Proposition~\ref{prop:2}.

\begin{lemma}\label{lem:B}
Let $B\in \mathscr{B}(\R^d)$ be a Borel set.
If $\bb{K} = (K_1,\ldots,K_d)\sim \mathcal{N\hspace{-0.8mm}M} (r,\bb{p})$ according to~\eqref{eq:1}, then, for all $i, j, \ell \in \{1, \ldots, d \}$, one has, for sufficiently large values of $r$,
\begin{align*}
&\left|\EE \big \{\delta_{i,K_i} \, \ind_B(\bb{K}) \big\} \right| \le p_0^{-1} \left\{\PP(\bb{K}\in B^c )\right\}^{1/2}, \\[1mm]
&\left|\EE \big\{ \delta_{i,K_i} \delta_{j,K_j} \, \ind_B(\bb{K}) \big\} - (\varrho_i \ind_{\{i = j\}} + \varrho_i \varrho_j)\right| \le 2 p_0^{-2} \left\{\PP(\bb{K}\in B^c )\right\}^{1/2}, \\[1mm]
&\left|\EE \big \{\delta_{i,K_i} \delta_{j,K_j} \delta_{\ell,K_{\ell}} \, \ind_B(\bb{K}) \big\} - r^{-1/2}
\begin{pmatrix}
2 \varrho_i \varrho_j \varrho_{\ell} + \ind_{\{i = j\}} \varrho_i \varrho_{\ell} + \ind_{\{j = \ell\}} \varrho_i \varrho_j \\
+ \ind_{\{i = \ell\}} \varrho_j \varrho_{\ell} + \ind_{\{i = j = \ell\}} \varrho_i
\end{pmatrix}
\right| \le 4 p_0^{-3} \left\{\PP(\bb{K}\in B^c )\right\}^{1/4}.
\end{align*}
\end{lemma}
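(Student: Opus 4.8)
The three inequalities all follow the same template, so the plan is to treat them in parallel. For a measurable function $g$ of $\bb{K}$, I would write $\EE\{g\,\ind_B(\bb{K})\} = \EE(g) - \EE\{g\,\ind_{B^c}(\bb{K})\}$. Taking $g$ to be $\delta_{i,K_i}$, then $\delta_{i,K_i}\delta_{j,K_j}$, then $\delta_{i,K_i}\delta_{j,K_j}\delta_{\ell,K_\ell}$, the full expectation $\EE(g)$ is exactly the moment supplied by Lemma~\ref{lem:A}, and in each case it coincides with the quantity subtracted on the left-hand side of the corresponding inequality (and equals $0$ in the first case). Consequently, each left-hand side reduces to $|\EE\{g\,\ind_{B^c}(\bb{K})\}|$, and it remains only to bound this tail contribution.

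For the first two estimates I would invoke the Cauchy–Schwarz inequality; for the third, Hölder's inequality with four conjugate exponents. Specifically, $|\EE\{\delta_{i,K_i}\ind_{B^c}(\bb{K})\}| \le \{\EE(\delta_{i,K_i}^2)\}^{1/2}\{\PP(\bb{K}\in B^c)\}^{1/2}$; for the product of two factors, Cauchy–Schwarz against $\ind_{B^c}$ followed by a second Cauchy–Schwarz on $\EE(\delta_{i,K_i}^2\delta_{j,K_j}^2)\le\{\EE(\delta_{i,K_i}^4)\}^{1/2}\{\EE(\delta_{j,K_j}^4)\}^{1/2}$ yields a factor $\{\PP(\bb{K}\in B^c)\}^{1/2}$; and for the product of three factors, Hölder with exponents $(4,4,4,4)$ — using that $\ind_{B^c}^4 = \ind_{B^c}$ — produces $\prod\{\EE(\delta^4)\}^{1/4}$ times $\{\PP(\bb{K}\in B^c)\}^{1/4}$. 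The appearance of the weaker exponent $1/4$ in the third bound is dictated by the fact that only moments up to the fourth order are controlled in Lemma~\ref{lem:A}; upgrading it to $1/2$ would require a sixth moment of the $\delta_{i,K_i}$, which is not available.

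The last step is to insert the uniform moment bounds $\EE(\delta_{i,K_i}^2)\le \varrho_i p_0^{-1}$ and $\EE(\delta_{i,K_i}^4)\le 4\varrho_i^2 p_0^{-2}$ from Lemma~\ref{lem:A}, valid for all sufficiently large $r$, and to verify that the resulting constants collapse to $p_0^{-1}$, $2p_0^{-2}$, and $4p_0^{-3}$. This is a short computation resting on the single observation that $\varrho_i = p_i/p_0 \le p_0^{-1}$ because $p_i \le 1$; for example, $(\varrho_i p_0^{-1})^{1/2}\le p_0^{-1}$, then $2(\varrho_i\varrho_j)^{1/2}p_0^{-1}\le 2p_0^{-2}$, and finally $2\sqrt{2}\,(\varrho_i\varrho_j\varrho_\ell)^{1/2}p_0^{-3/2}\le 4p_0^{-3}$. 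The only genuine obstacle is therefore bookkeeping: tracking these constants exactly. There is no conceptual difficulty once Lemma~\ref{lem:A} is in hand, and I would expect the write-up to amount to a few lines per inequality.
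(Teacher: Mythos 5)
Your proposal is correct and follows essentially the same route as the paper: reduce each left-hand side to $|\EE\{g\,\ind_{B^c}(\bb{K})\}|$ using the exact moments of Lemma~\ref{lem:A}, bound that tail term by H\"older's inequality (with exponent $4$ on each $\delta$-factor for the second and third estimates, which is exactly why the third bound only carries $\{\PP(\bb{K}\in B^c)\}^{1/4}$), and then absorb the moment bounds $\EE(\delta_{i,K_i}^2)\le \varrho_i p_0^{-1}$ and $\EE(\delta_{i,K_i}^4)\le 4\varrho_i^2 p_0^{-2}$ together with $\varrho_i\le p_0^{-1}$ into the stated constants. The only cosmetic difference is that you obtain the two-factor case by iterating Cauchy--Schwarz rather than a single three-exponent H\"older application; the resulting bound is identical.
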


\begin{proof}[Proof of Lemma~\ref{lem:B}]
Using H\"older's inequality and Lemma~\ref{lem:A}, one has, for $i \in \{1, \ldots, d \}$,
\begin{align*}
\left|\EE \{\delta_{i,K_i} \, \ind_B(\bb{K}) \}\right| = \left|\EE \{\delta_{i,K_i} \, \ind_{B^c}(\bb{K}) \}\right| \le \big \{\EE(\delta_{i,K_i}^2) \big\}^{1/2} \left\{\PP(\bb{K}\in B^c ) \right\}^{1/2} \le p_0^{-1} \left\{\PP(\bb{K}\in B^c ) \right\}^{1/2}.
\end{align*}
Similarly, one has, for all integers $i, j \in \{1, \ldots, d \}$,
\[
\left|\EE \big\{\delta_{i,K_i} \delta_{j,K_j} \, \ind_B(\bb{K}) \big \} - (\varrho_i \ind_{\{i = j\}} + \varrho_i \varrho_j) \right|
= \left|\EE \big \{\delta_{i,K_i} \delta_{j,K_j} \, \ind_{B^c }(\bb{K}) \big \}\right|,
\]
and, for sufficiently large values of $r$, this term is bounded above by
\[
\big \{\EE(\delta_{i,K_i}^4) \big \}^{1/4} \big \{\EE(\delta_{j,K_j}^4) \big \}^{1/4} \left\{\PP(\bb{K}\in B^c )\right\}^{1/2} \le (4 p_0^{-4})^{1/4} (4 p_0^{-4})^{1/4} \left\{\PP(\bb{K}\in B^c )\right\}^{1/2} \le 2 p_0^{-2} \left\{\PP(\bb{K}\in B^c )\right\}^{1/2}.
\]
Moreover, for all $i, j, \ell \in \{1, \ldots, d \}$,
\[
\left|\EE \big \{\delta_{i,K_i} \delta_{j,K_j} \delta_{\ell,K_{\ell}} \, \ind_B(\bb{K}) \big \} - r^{-1/2}
\begin{pmatrix}
2 \varrho_i \varrho_j \varrho_{\ell} + \ind_{\{i = j\}} \varrho_i \varrho_{\ell} + \ind_{\{j = \ell\}} \varrho_i \varrho_j \\
+ \ind_{\{i = \ell\}} \varrho_j \varrho_{\ell} + \ind_{\{i = j = \ell\}} \varrho_i
\end{pmatrix}
\right| = \left|\EE \big \{\delta_{i,K_i} \delta_{j,K_j} \delta_{\ell,K_{\ell}} \, \ind_{B^c }(\bb{K}) \big \}\right|,
\]
and, for sufficiently large values of $r$, this term is bounded above by
\begin{multline*}
\big \{ \EE(\delta_{i,K_i}^4) \big \}^{1/4} \big \{\EE(\delta_{j,K_j}^4) \big \}^{1/4} \big \{\EE(\delta_{\ell,K_{\ell}}^4) \big \}^{1/4} \big \{\PP(\bb{K}\in B^c ) \big \}^{1/4} \\
\le (4 p_0^{-4})^{1/4} (4 p_0^{-4})^{1/4} (4 p_0^{-4})^{1/4} \big \{\PP(\bb{K}\in B^c ) \big \}^{1/4} \le 4 p_0^{-3} \big \{\PP(\bb{K}\in B^c ) \big \}^{1/4}.
\end{multline*}
This concludes the proof.\hfill $\Box$
\end{proof}

\end{document}